\begin{document}

\newtheorem{thm}{Theorem}
\newtheorem{cor}{Corollary}[section]
\newtheorem{lem}{Lemma}
\newtheorem{prop}{Proposition}[section]
\newtheorem{Def}{Definition}
\newtheorem{rem}{Remark}
\newtheorem{ex}{Example}[section]
\newtheorem{alg}{Algorithm}[section]
\newenvironment{pf}[1][Proof]{\noindent\textbf{#1.} }{\hfill\rule{1mm}{2mm}}

\def\theequation{\thesection.\arabic{equation}}
\makeatletter \@addtoreset{equation}{section} \makeatother

\title{\bf Conditional Fault Diagnosis of Bubble Sort Graphs under the PMC Model
\thanks {The work was supported  by NNSF of China (No. 61072080, No.11071233, 60973014),
Specialized Research Fund for the Doctoral Program of Higher
Education of China (No.200801411073), and the Key Project of Fujian
Provincial Universities services to the western coast of the
straits-Information Technology Research Based on Mathematics.} }

\author{Shuming Zhou$^{1, 3}$\quad Jian Wang$^{2}$\quad  Xirong Xu$^2$\quad Jun-Ming Xu$^3$\\
\small $^1$Key Laboratory of Network Security and Cryptology,
Fujian Normal University, \\
\small Fuzhou, Fujian, 350007, P.R.China  \\
\small $^2$Department of Computer Science, Dalian University of
Technology,\\
 \small Dalian, 116024, P.R.China\\
\small $^3$Department of Mathematics, University of Science and Technology of China, \\
\small Hefei, Anhui, 230026, P.R. China\\
}
\maketitle

{\bf\noindent Abstract:}\ As the size of a multiprocessor system
increases, processor failure is inevitable, and fault identification
in such a system is crucial for reliable computing. The fault
diagnosis is the process of identifying faulty processors in a
multiprocessor system through testing. For the practical fault
diagnosis systems, the probability that all neighboring processors
of a processor are faulty simultaneously is very small, and the
conditional diagnosability, which is a new metric for evaluating
fault tolerance of such systems, assumes that every faulty set does
not contain all neighbors of any processor in the systems. This
paper shows that the conditional diagnosability of bubble sort
graphs $B_n$ under the PMC model is $4n-11$ for $n \geq 4$, which is
about four times its ordinary diagnosability under the PMC model.

\vskip 0.3cm \noindent{\bf Keywords}:\ Conditional diagnosability;
Fault diagnosis; Bubble sort graphs.

\vskip 0.5cm

\section{Introduction}
With the rapid development of multi-processor systems, fault
diagnosis of interconnection networks has become increasingly
prominent. As a significant increase in the number of processors,
processor failure is inevitable. In order to ensure the stable
running of the systems, we must find out the faulty processors and
repair or replace them. System-level diagnosis, as a powerful tool,
has been widely used. The basic idea is to design an effective
algorithm to find out faulty processors through a comprehensive
analysis of test results which are stimulated by adjacent
processors. This method does not have to use special equipment to
test.

Most of the recent research efforts in system-level diagnosis have
focused on enhancing the applicability of system-level
diagnosis-based approaches to practical scenarios such as VLSI
testing \cite{hat98}, diagnosis of interconnection networks employed
in parallel computers \cite{ltch05,ltth04}. The classical
diagnosability of a system is small owing to the fact that it
ignores the unlikelihood of the corresponding processors failing at
the same time. Therefore, it is attractive to develop more different
measures of diagnosability based on application environment, network
topology, network reliability, and statistics related to fault
patterns. The self-diagnosis of system is implemented without
additional cost which has a very high value in practice.

The PMC model, proposed by Preparata {\it et al.} \cite{pmc67} for
dealing with the System's self-diagnosis, assumed that each node can
test its neighboring nodes, and test results are "faulty" or
"fault-free". Under this model, the diagnosability of an
interconnection network is the maximum number of faulty nodes in the
system that can be identified. To grant more accurate measurement of
diagnosability for a large-scale processing system, Lai {\it et al.}
\cite{ltch05} introduce the conditional diagnosability of a system
under the PMC model, which suppose the probability that all adjacent
nodes of one node are faulty simultaneously is very small. That is,
conditional diagnosability is the diagnoability under the condition
that all adjacent nodes of any node can't be faulty simultaneously.
They further showed that the conditional diagnosability of $Q_n$ is
$4(n-2)+1$ for $n \geq 5$. Xu {\it et al.} \cite{xth09} established
the conditional diagnosability of matching composition networks MCN
under PMC model, which generalized the result on BC networks
investigated by Zhu \cite{z08}. Xu {\it et al.} \cite{xhs10} studied
the conditional diagnosability of Shuffle-cubes under the PMC model.
Zhu {\it et al.} \cite{z08} showed that the conditional
diagnosability of folded hypercubes $FQ_n$ is $4n-3$ for $n\geq 8$.
Recently, Fan {\it et al.} ~\cite{fyzzz09} have derived the
diagnosability of $DCC$ linear congruential graphs under the precise
and pessimistic strategies based on the PMC diagnostic model. N.W.
Chang, S.Y. Hsieh ~\cite{ch10} studied the conditional
diagnosability of augmented cubes under the PMC model.

This paper establishes the conditional diagnosability of the bubble
sort graph $B_n$ under the PMC model. The remainder of this paper is
organized as follows. In Section 2, we introduce some terminology
and preliminaries used through this paper. Section 3 concentrates on
the conditional diagnosability of $B_n$. Section 4 concludes the
paper.

\section{Terminologies and Preliminaries}
For notation and terminology not defined here we follow \cite{x01}.
A multi-processor system, whose topological structure is an
interconnection network, can be modeled as a simple undirected graph
$G(V,E)$, where a vertex $u\in V$ represents a processor and an edge
$(u,v)\in E$ represents a link between vertices $u$ and $v$. If at
least one end of an edge is faulty, the edge is said to be faulty;
otherwise, the edge is said to be fault-free. The connectivity of a
graph $G$, denoted by $\kappa(G)$, is the minimum number of vertices
whose deletion results in a disconnected graph or a trivial graph.
The components of a graph $G$ are its maximal connected subgraphs. A
component is trivial if it has no edges; otherwise, it is
nontrivial. The neighborhood set of the vertex set $X\subset V(G)$
is defined as $N_G(X)=\{y\in V(G)\ |\ \exists\ x\in X\ such\ that\
(x,y)\in E(G)\}-X$. For convenience, $\lvert S\rvert$ denotes the
number of elements in the set $S$. And we also use $\lvert G\rvert$
to represent the number of vertices in the graph $G$.

The $PMC$ model requires that $u$ and $v$ can test each other for
any edge $(u,v)\in E(G)$. When $u$ tests $v$, we call $u$ as testing
node, and call $v$ as tested node. The test output is 0 (or 1) which
implies that $v$ is faulty ( or faulty-free). $\sigma(u,v)$ denotes
the output of $u$ testing $v$. And it is assumed that the test
outputs are correct if the testing node is fault-free; otherwise the
outputs are unreliable.

The collection of all outputs is called the syndrome $\sigma$. For a
given syndrome $\sigma$, a subset of vertices $F\subset V(G)$ is
said to be consistent with $\sigma$ if the syndrome $\sigma$ can be
produced from the situation that, for $\forall(u,v)\in E$ such that
$u\in V-F$, $\sigma(u,v)=1$ if and only if $v\in F$. It means that
$F$ is a possible set of faulty nodes. Since test output produced by
a faulty node is unreliable, a given set $F$ of faulty nodes may
produce different syndromes. On the other hand, different faulty
sets may produce the same syndrome. Let $\sigma (F)$ represent the
set of all syndromes that could be produced by $F$. Two distinct
sets $F_1,F_2 \subset V$ are said to be distinguishable if
$\sigma(F_1) \cap \sigma(F_2)=\varnothing$; otherwise, $F_1$ and
$F_2$ are said to be indistinguishable. We say that $(F_1, F_2)$ is
a distinguishable pair if $\sigma(F_1) \cap
\sigma(F_2)=\varnothing$; otherwise, $(F_1, F_2)$ is an
indistinguishable pair. We also use $F_1\Delta F_2=(F_1-F_2)\cup
(F_2-F_1)$ to denote the symmetric difference of $F_1$ and $F_2$.

\begin{Def}\cite{ltch05, pmc67}
A system $G$ is said to be $t-$diagnosable if, a given syndrome can
be produced by a unique faulty set, provided that the number of
faulty nodes present in the system does not exceed $t$. The largest
value of $t$, for which a given system $G$ is $t-$diagnosable, is
called the diagnosability of system $G$, denoted as $t(G)$.
\end{Def}

\begin{lem}\cite{ltch05, pmc67}
For any two distinct sets $F_1,F_2\subset V(G)$ of graph $G=(V,E)$,
$(F_1, F_2)$ is a distinguishable pair iff there exists a vertex
$u\in V(G)-(F_1\cup F_2)$ and there exists a vertex $v\in F_1\Delta
F_2 $ such that $(u,v)\in E(G)$.\hfill $\Box$
\end{lem}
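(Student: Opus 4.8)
The plan is to prove both implications by reasoning about when a single syndrome can be consistent with two candidate fault sets simultaneously, since by definition $(F_1,F_2)$ is distinguishable exactly when $\sigma(F_1)\cap\sigma(F_2)=\varnothing$. Throughout I would write $N=V(G)-(F_1\cup F_2)$ for the set of vertices that are fault-free under both hypotheses and abbreviate $\Delta=F_1\Delta F_2$, so that the lemma asserts that $(F_1,F_2)$ is distinguishable iff there is an edge joining $N$ to $\Delta$. The crucial fact I would lean on is that any vertex $u\in N$ tests reliably under \emph{both} fault sets, since $u\notin F_1$ and $u\notin F_2$; hence its test outputs are pinned down by the consistency condition in each case.

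For the forward direction (existence of the edge implies distinguishability), suppose $u\in N$ and $v\in\Delta$ with $(u,v)\in E(G)$, and assume without loss of generality that $v\in F_1-F_2$. If some syndrome $\sigma$ were consistent with both $F_1$ and $F_2$, then applying the consistency condition at the fault-free tester $u$ would force $\sigma(u,v)=1$ (because $v\in F_1$) and at the same time $\sigma(u,v)=0$ (because $v\notin F_2$), a contradiction. Hence $\sigma(F_1)\cap\sigma(F_2)=\varnothing$ and the pair is distinguishable; the symmetric case $v\in F_2-F_1$ is identical.

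For the reverse direction I would argue the contrapositive: assuming there is \emph{no} edge between $N$ and $\Delta$, I would explicitly build one syndrome $\sigma$ lying in $\sigma(F_1)\cap\sigma(F_2)$, which shows the pair is indistinguishable. The construction is a case analysis on each directed test $(u,v)$ according to where the tester $u$ sits: if $u\in F_1\cap F_2$ the output is unconstrained and may be set arbitrarily; if $u\in F_1-F_2$ I set $\sigma(u,v)=1$ iff $v\in F_2$ (free under $F_1$, correct under $F_2$); if $u\in F_2-F_1$ I make the mirror assignment; and if $u\in N$ then, since $u$ has no neighbour in $\Delta$, every neighbour $v$ satisfies $v\in F_1\Leftrightarrow v\in F_2$, so putting $\sigma(u,v)=1$ iff $v\in F_1$ meets both consistency requirements at once. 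Verifying that this $\sigma$ is consistent with $F_1$ and with $F_2$ is then a routine check against the definition.

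I expect no genuine obstacle here; the only step needing care is the backward construction, where one must confirm that the outputs at fault-free testers in $N$ are forced to agree for the two hypotheses, and this is precisely what the absence of an $N$--$\Delta$ edge guarantees. Everything else follows directly from unwinding the definitions of syndrome, consistency, and symmetric difference.
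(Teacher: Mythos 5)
Your proof is correct and complete. Note that the paper itself gives no proof of this lemma---it is stated with a $\Box$ and attributed to Lai \emph{et al.} and Preparata \emph{et al.}---so there is no internal argument to compare against; what you have written is the standard argument from those sources. Both halves are sound: the forward direction correctly uses the fact that a tester $u\in V(G)-(F_1\cup F_2)$ is constrained under \emph{both} hypotheses, so an edge from $u$ into $F_1\Delta F_2$ forces conflicting values of $\sigma(u,v)$; the backward direction correctly builds a common syndrome by cases on the tester's location, and the only delicate case (testers in $V(G)-(F_1\cup F_2)$) is resolved exactly by the assumed absence of edges into $F_1\Delta F_2$, which makes the two consistency requirements coincide. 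One cosmetic remark: the paper's prose convention (``output is 0 (or 1) which implies that $v$ is faulty (or faulty-free)'') contradicts its formal consistency definition ($\sigma(u,v)=1$ iff $v\in F$); you work with the formal definition throughout, which is the correct choice.
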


So, if two sets $F_1$ and $F_2$ are indistinguishable, then there is
no edge between $F_1\Delta F_2 $ and  $V(G)-(F_1\cup F_2)$.

\begin{Def}\cite{ltch05}
A faulty set $F\subset V(G)$ is called a conditional fault-set, if
$N(v)\nsubseteq F$ for any vertex $v\in V(G)$.
\end{Def}

\begin{Def}\cite{ltch05}
A system $G$ is said to be conditionally $t-$diagnosable, if for any
two distinct conditional fault-sets $F_1,F_2 \subset V(G)$ with
$\lvert F_1 \rvert \leq t,\lvert F_2\rvert \leq t$, $(F_1, F_2)$ is
a distinguishable pair. The largest value of $t$ which makes system
$G$ is conditionally $t-$diagnosable is called the conditional
diagnosability of system $G$, denoted as $t_c(G)$.
\end{Def}

\begin{lem}\cite{ltch05}
A system $G$ is said to be $t-$diagnosable under the PMC model, if
and only if $\forall F_1,F_2\subset V(G)$, $F_1\neq F_2$ with
$\lvert F_1 \rvert \leq t,\lvert F_2 \rvert \leq t$, $(F_1,F_2)$ is
a distinguishable pair.\hfill $\Box$
\end{lem}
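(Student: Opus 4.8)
The plan is to establish this biconditional by unwinding the two definitions it relates — the definition of $t$-diagnosability (Definition 1) and the definition of a distinguishable pair — and showing that each side is the negation of a single failure scenario: the existence of two distinct admissible faulty sets that can generate a common syndrome.

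First I would restate $t$-diagnosability in contrapositive form. A system fails to be $t$-diagnosable exactly when some syndrome $\sigma^{*}$ is consistent with two distinct faulty sets $F_1 \neq F_2$, each of size at most $t$ (for if every syndrome were consistent with a unique such set, diagnosis would succeed). By the definition of the syndrome set $\sigma(F)$ as the collection of all syndromes producible by $F$, the assertion ``$\sigma^{*}$ is producible by both $F_1$ and $F_2$'' is equivalent to ``$\sigma^{*} \in \sigma(F_1) \cap \sigma(F_2)$''. Hence the existence of such a common syndrome is precisely the condition $\sigma(F_1) \cap \sigma(F_2) \neq \varnothing$, which by definition says that $(F_1, F_2)$ is an indistinguishable pair.

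With this dictionary in place, both directions follow by contraposition. For necessity, if some pair $(F_1, F_2)$ of distinct sets with $|F_1|, |F_2| \leq t$ were indistinguishable, then $\sigma(F_1) \cap \sigma(F_2)$ would contain a syndrome producible by two distinct admissible faulty sets, contradicting the uniqueness demanded by $t$-diagnosability. For sufficiency, if $G$ were not $t$-diagnosable, the witnessing syndrome would lie in $\sigma(F_1) \cap \sigma(F_2)$ for two distinct sets of size at most $t$, making them indistinguishable and contradicting the hypothesis.

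I do not anticipate a genuine obstacle, since the statement is essentially Definition 1 re-expressed in the language of distinguishability; the only point needing care is to keep the size bounds $|F_1| \leq t$ and $|F_2| \leq t$ attached to both candidate sets throughout, so that the equivalence is between $t$-diagnosability and distinguishability of pairs drawn from the same size-bounded family. Once combined with Lemma 1, this lemma becomes the working tool for the later analysis of $B_n$, reducing the distinguishability of a pair to the purely graph-theoretic requirement that some edge join $F_1 \Delta F_2$ to $V(G) - (F_1 \cup F_2)$.
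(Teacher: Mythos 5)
Your proof is correct. Note that the paper itself offers no argument for this lemma at all---it is quoted from the literature (Lai et al.\ and Preparata et al.) and closed with a box immediately---so there is no internal proof to compare against; your definitional unwinding, reducing both sides to the non-existence of a common syndrome in $\sigma(F_1)\cap\sigma(F_2)$ for two distinct sets of size at most $t$, is the standard argument and correctly keeps the size bound attached to both candidate sets, which is the one place such a proof can go wrong.
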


An equivalent way of stating the lemma is the following.

\begin{lem}\cite{ltch05}
A system $G$ is said to be $t-$diagnosable under the PMC model, if
and only if for an indistingushable pair of sets $F_1,F_2\subset
V(G)$, it implies that $\lvert F_1 \rvert > t$ or $\lvert F_2 \rvert
> t$.\hfill $\Box$
\end{lem}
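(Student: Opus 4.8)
The plan is to obtain this statement as a direct logical reformulation of the preceding lemma, rather than by reasoning afresh about the PMC model. Since the text explicitly introduces it as ``an equivalent way of stating the lemma,'' the work consists entirely in verifying that the two right-hand conditions of the biconditionals coincide.

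First, I would record the condition from the previous lemma in implication form: for all $F_1, F_2 \subset V(G)$, if $F_1 \neq F_2$ and $|F_1| \le t$ and $|F_2| \le t$, then $(F_1, F_2)$ is a distinguishable pair. Because the biconditional ``$G$ is $t$-diagnosable iff [this condition holds]'' is already established, it suffices to show that this condition is logically equivalent to the one stated here, namely that every indistinguishable pair $(F_1, F_2)$ satisfies $|F_1| > t$ or $|F_2| > t$.

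Next, I would take the contrapositive of the inner implication. The negation of ``$(F_1, F_2)$ is distinguishable'' is ``$(F_1, F_2)$ is indistinguishable,'' while the negation of the hypothesis ``$F_1 \neq F_2$ and $|F_1| \le t$ and $|F_2| \le t$'' is ``$F_1 = F_2$ or $|F_1| > t$ or $|F_2| > t$.'' The condition thus becomes: for all $F_1, F_2$, if $(F_1, F_2)$ is indistinguishable then $F_1 = F_2$ or $|F_1| > t$ or $|F_2| > t$. Invoking the convention fixed by the earlier definitions, that distinguishability and indistinguishability are notions about two \emph{distinct} sets, an ``indistinguishable pair'' already presupposes $F_1 \neq F_2$; the disjunct $F_1 = F_2$ is therefore never realized and may be dropped. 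What remains is precisely ``every indistinguishable pair $(F_1, F_2)$ has $|F_1| > t$ or $|F_2| > t$,'' which establishes the equivalence of the two conditions and hence the lemma.

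I do not expect a genuine obstacle here, since the argument is a single contrapositive step. The only point requiring care is the degenerate case $F_1 = F_2$: it must be excluded by the standing convention that a pair consists of distinct sets, because otherwise the contrapositive would retain the extra disjunct $F_1 = F_2$ and the two formulations would not be literally identical.
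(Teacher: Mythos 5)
Your proposal is correct and takes the same route as the paper, which offers no separate argument at all: it introduces this lemma only as ``an equivalent way of stating'' the preceding one. Your contrapositive computation, together with the observation that the paper defines distinguishability and indistinguishability only for \emph{distinct} sets (so the disjunct $F_1 = F_2$ never arises and may be dropped), is exactly the verification the paper leaves implicit.
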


\begin{lem}
Let $G(V,E)$ be a multi-processor system, and $(F_1, F_2)$ be an
indistinguishable conditional pair with $F_1\neq F_2$, then the
following two conditions hold:

(1)\quad $\lvert N(u)\cap \big (V-(F_1\cup F_2)\big)\rvert \geq 1$
for $u\in \big (V-(F_1\cup F_2)\big)$;

(2)\quad $\lvert N(v)\cap(F_1-F_2)\rvert \geq 1$ and $\lvert
N(v)\cap(F_2-F_1)\rvert \geq 1$ for $v\in F_1\Delta F_2$.\hfill
$\Box$
\end{lem}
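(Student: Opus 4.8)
The plan is to derive both conditions directly from two facts already in hand: the no-edge consequence of indistinguishability recorded in the remark following Lemma 1 (there is no edge between $F_1 \Delta F_2$ and $V(G) - (F_1 \cup F_2)$), together with the defining property of a conditional fault-set, namely $N(v) \nsubseteq F_i$ for every vertex $v$ and each $i \in \{1,2\}$. The recurring idea is that the no-edge property pins down where the neighborhood of a relevant vertex can lie, and the conditional property then supplies the needed neighbor or the contradiction.

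For condition (1) I would argue by contradiction. Suppose some $u \in V - (F_1 \cup F_2)$ has $N(u) \subseteq F_1 \cup F_2$. Since $u$ lies outside $F_1 \cup F_2$, the no-edge property forbids $u$ from having any neighbor in $F_1 \Delta F_2$, so $N(u) \cap (F_1 \Delta F_2) = \varnothing$. Writing $F_1 \cup F_2 = (F_1 \cap F_2) \cup (F_1 \Delta F_2)$, these two facts collapse to $N(u) \subseteq F_1 \cap F_2 \subseteq F_1$, contradicting the conditional fault-set property of $F_1$. Hence every such $u$ must have at least one neighbor in $V - (F_1 \cup F_2)$.

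For condition (2) the argument is direct and symmetric. Fix $v \in F_1 \Delta F_2$. Because $v \in F_1 \Delta F_2$, the no-edge property gives $N(v) \subseteq F_1 \cup F_2$. Applying the conditional property to $F_1$, there is a neighbor $w$ of $v$ with $w \notin F_1$; combined with $N(v) \subseteq F_1 \cup F_2$ this forces $w \in F_2 - F_1$, so $N(v) \cap (F_2 - F_1) \neq \varnothing$. Applying the conditional property to $F_2$ in the same way yields a neighbor in $F_1 - F_2$, so $N(v) \cap (F_1 - F_2) \neq \varnothing$. Since both the hypotheses and the two conclusions are symmetric under interchanging $F_1$ and $F_2$, this single argument handles both $v \in F_1 - F_2$ and $v \in F_2 - F_1$ at once.

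I do not expect any serious obstacle; the proof is short and essentially mechanical once the two cited facts are combined. The only step requiring a moment of care is the set-theoretic reduction in condition (1)—observing that ``no neighbor in $F_1 \Delta F_2$'' together with ``all neighbors in $F_1 \cup F_2$'' forces all neighbors into $F_1 \cap F_2$, which is precisely what produces the contradiction with the conditional fault-set hypothesis. The rest is a straightforward pairing of the no-edge property with the two instances of $N(v) \nsubseteq F_i$.
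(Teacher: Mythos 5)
Your proof is correct: both parts follow exactly as you argue, by combining the no-edge consequence of Lemma 1 with the defining property $N(v)\nsubseteq F_i$ of a conditional fault-set, and the set-theoretic step $N(u)\subseteq (F_1\cup F_2)-(F_1\Delta F_2)=F_1\cap F_2$ in part (1) is the right way to reach the contradiction. The paper itself states this lemma with no proof at all (just a $\Box$), evidently regarding it as immediate from those same two facts, so your argument is precisely the intended one, merely written out in full.
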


\vskip6pt Let $(F_1, F_2)$ be an indistinguishable conditional pair,
and let $S=F_1\cap F_2$. By observation, every component of $G-S$ is
nontrivial. Moreover, we have

(1) for each component $C_1$ of $G-S$, if $C_1\cap (F_1\triangle
F_2)=\emptyset$, then $deg_{C_1}(v)\geq 1$ for $v\in V(C_1)$;

(2) for each component $C_2$ of $G-S$, if $C_2\cap (F_1\triangle
F_2)\neq \emptyset$, then $deg_{C_2}(v)\geq 2$ for $v\in V(C_2)$.

Network reliability is one of the major factors in designing the
topology of an interconnection network. The hypercubes and its
variants were the first major class of interconnection networks. The
$n$-star graph ($S_n$ for short), which proposed by Akers {\it et
al.}, is an attractive alternative to the hypercube~\cite{ak89}. The
bubble-sort graphs similar to the $n$-star graph~\cite{dt94}, which
belongs to the class of Cayley graphs, have been attractive
alternative to the hypercubes. They have some good topological
properties such as highly symmetry and recursive structure. In
particular, the $n$-dimensional bubble-sort graph $B_n$ is vertex
transitive, while it is not edge transitive~\cite{ljd93}. The
connectivity of $B_n$ is $n-1$  and the diameter is $n(n-1)/2$. It
was shown that finding a shortest path in $B_n$ can be accomplished
by using the familiar bubble-sort algorithm~\cite{ak89}. K. Kaneko,
Y. Suzuki~\cite{ks04} proposed a polynomial time algorithm for
finding disjoint paths in $B_n$. Y. Suzuki and K. Kaneko~\cite{sk06}
gave an $O(n^5)$-time algorithm that solves the node-to-set disjoint
paths problem in $B_n$. Y. Kikuchi and T. Araki~\cite{ka06} have
shown that the bubble sort graph $B_n$ is edge-bipancyclic for
$n\geq 5$, and $B_n-F$ is bipancyclic when $n\geq 4$ and $|F|\leq
n-3$, where $F$ is a subset of $E(B_n)$. T. Araki and Y.
Kikuchi~\cite{ak07} showed that the bubble-sort graph $B_n$ is
hyper-hamiltonian laceable for $n\geq 4$, and $B_n$ is still
hamiltonian laceable and strongly hamiltonian laceable if there are
at most $n-3$ faulty edges. L.M. Shih et al.~\cite{s09} showed that
the bubble-sort graph is fault tolerant maximal local connected.

Now, we introduce the bubble-sort graphs. An $n$-dimensional
bubble-sort graph is $(n-1)-$regular and symmetric. It has $n!$
nodes and $\frac{(n-1)n!}{2}$ edges while its connectivity and
diameter are $n-1$ and $\frac{n(n-1)}{2}$, respectively.

The bubble-sort graph can be equivalently defined by using adjacent
swapping operation with the following fashion (see Fig.1).

\begin{Def}(bubble-sort graph) An $n$-bubble-sort
graph $B_n$ has $n!$ nodes. Each node has a unique address, which is
a permutation of $n$ symbols $1, 2, \ldots, n$. A node that has an
address $u = u_1u_2\ldots u_n$ is adjacent to node whose address is
$u^i=u_1u_2\ldots u_{i-1}u_{i+1}u_i\ldots u_n$ with $1\leq i \leq
n-1$.
\end{Def}

\begin{center}
\scalebox{0.6}{\includegraphics*[75pt,407pt][396pt,773pt]{d:1.eps}}
\end{center}
\begin{center}
 Fig.1 The bubble sort graph $B_4$.
\end{center}
A very important property of the bubble-sort graph is its recursive
structure~\cite{ak07}. We decompose $B_n$ into $n$ subgraphs
$B_n^i$($i=1,2,\ldots,n$) such that each $B_n^i$ fixes $i$ in the
last position of the label strings which represents the vertices,
and so $B_n^i$ is isomorphic to $B_{n-1}$. Let $S_i=S\cap B_n^i$ for
$i=1,2,\ldots, n$. For $1\leq i\leq n$, the $i$th element of the
label of vertex $u$ in $B_n$ is represented by $u[i]$. An edge
$e=xy$ is called a pair-edge if $x[n]=y[n]$ and $x[n-1]=y[n-1]$.
$e=x'y'$ is called the coupled pair-edge corresponding to $e=xy$,
where $x'[n]=x[n-1]$, and $y'[n]=y[n-1]$. We call two edge, $xx'$
and $yy'$, the coupler or two pair-edge $e=xy$ and $e'=x'y'$.

Let $S$ be a faulty set of $V(B_n)$. Denote $A_1=\{ B_n^i\ |\ B_n^i$
contains at least $n-2$ nodes in $S\}$, and $A_2=\{ B_n^i\ |\ B_n^i$
contains at most $n-3$ nodes in $S\}$. we also denote $A_2$ the
subgraph of $B_n$ induced by the union of subgraphs in $A_2$.

\begin{lem}
$A_2-S$ is connected.
\end{lem}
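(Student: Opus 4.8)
The plan is to combine two facts: that each subgraph $B_n^i\in A_2$ stays internally connected after deleting $S$, and that any two such subgraphs are joined by a surviving cross edge. First I would recall that each $B_n^i$ is isomorphic to $B_{n-1}$, so $\kappa(B_n^i)=\kappa(B_{n-1})=n-2$. By the definition of $A_2$, every $B_n^i\in A_2$ satisfies $|S_i|=|S\cap V(B_n^i)|\leq n-3<n-2$, so deleting $S_i$ cannot disconnect $B_n^i$; hence $B_n^i-S_i$ is connected for each $B_n^i\in A_2$. This reduces the lemma to showing that the family $\{\,B_n^i-S_i : B_n^i\in A_2\,\}$ is linked together inside $B_n-S$, i.e.\ that the auxiliary graph on these pieces, with an edge whenever a cross edge between two pieces survives, is connected.

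For the linking step I would count the edges between two subgraphs $B_n^i$ and $B_n^j$. Each vertex of $B_n^i$ has a unique neighbour obtained by the swap at position $n-1$, and this neighbour lies in $B_n^j$ exactly when its $(n-1)$st symbol equals $j$; there are precisely $(n-2)!$ such vertices, so $B_n^i$ and $B_n^j$ are joined by a matching of $(n-2)!$ independent cross edges. A cross edge is destroyed only if one of its two endpoints lies in $S$, and each faulty vertex kills at most one edge of this matching, so at most $|S_i|+|S_j|\leq 2(n-3)$ of them are lost. Hence at least $(n-2)!-2(n-3)$ cross edges survive between any two subgraphs of $A_2$. For $n\geq 5$ this quantity is strictly positive, so every pair of pieces is directly connected in $B_n-S$; together with the internal connectivity established above, this gives that $A_2-S$ is connected, independently of the total size of $S$.

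The main obstacle is the boundary case $n=4$, where $(n-2)!-2(n-3)=0$ and two subgraphs of $A_2$ can in principle have both their cross edges blocked, so the generic counting no longer suffices (indeed, without any control on $|S|$ the statement fails for $n=4$). Here I would invoke the ambient bound $|S|\leq 4n-11$ governing Section~3: since every subgraph in $A_1$ carries at least $n-2$ faulty vertices, $(n-2)\,|A_1|\leq|S|$ forces $|A_1|\leq 2$, so $A_2$ contains at least two of the four subgraphs and the fault budget distributed over $A_2$ is small. The key refinement is that a single faulty vertex of $B_4^i$ blocks at most one inter-subgraph link, namely the one toward the subgraph indexed by its $(n-1)$st symbol, so a cheaply placed fault cannot sabotage several links at once. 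Using this I would argue that whenever the direct link between two pieces of $A_2$ fails, a third piece of $A_2$ supplies a detour, and settle the finitely many tight configurations by direct inspection of the cross-edge pattern of $B_4$. This small-$n$ case analysis, rather than the uniform counting, is the delicate part of the argument.
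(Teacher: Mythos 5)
Your proposal is correct and takes essentially the same route as the paper: each $B_n^i\in A_2$ survives the deletion of $S_i$ because $|S_i|\le n-3<n-2=\kappa(B_{n-1})$, and any two pieces of $A_2$ stay directly joined because at most $|S_i|+|S_j|\le 2(n-3)<(n-2)!$ of the $(n-2)!$ independent cross edges can be destroyed when $n\ge 5$. Your extra treatment of $n=4$ goes beyond the paper, which implicitly restricts this lemma to $n\ge 5$ (the case $B_4$ is handled separately in Theorem 2); your observation that the unrestricted statement fails at $n=4$ is accurate.
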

\begin{proof}
If $|A_2|=0$ then there is nothing to do, and so assume $|A_2|\geq
1$. If $|A_2|=1$, then the lemma holds, since $B^i_n$ in $A_2$ is
$n-2$-connected. Assume $|A_2|\geq 2$ below. To prove the lemma, we
only need to show that $B_n^i$ and $B_n^j$ are connected in $A_2-S$
for any two distinct $B_n^i$ and $B_n^j$ in $A_2$.

Obviously, each of $B_n^i$ and $B_n^j$ is connected. Since vertex
$u=u_1u_2\ldots u_{n-2}ij\in B_n^j$ links to vertex $v=u_1u_2\ldots
u_{n-2}ji\in B_n^i$ with the same first $n-2$ positions of the label
strings, there are $(n-2)!$ matching edges between $B_n^i$ and
$B_n^j$, that is to say these edges are non-adjacent. Because
$(n-2)!>2(n-3)$ for $n\geq 5$, there exists at least one fault edge
between $B_n^i$ and $B_n^j$. Thus $B_n^i$ is connected to $B_n^j$,
thus $A_2-F$ is connected.
\end{proof}

\section{Conditional Diagnosability of $B_n$}
We decompose $B_n$ into $n$ subgraphs $B_n^i(i=1,2,...,n)$ such that
all vertices of $B_n^i$ have the same last bit $i$ of the label
strings which represents the vertices and $B_n^i \cong B_{n-1}$.

\begin{lem}
$t_c(B_n)\leq 4n-11$.
\end{lem}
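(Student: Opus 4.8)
The plan is to prove this upper bound by exhibiting two distinct conditional fault-sets $F_1,F_2$ with $|F_1|=|F_2|=4n-10$ that form an \emph{indistinguishable} pair; by the definition of conditional $t$-diagnosability this shows $B_n$ is not conditionally $(4n-10)$-diagnosable, whence $t_c(B_n)\le 4n-11$. The gadget to use is the coupler of Section 2, chosen so that its outer boundary is as large as possible relative to its size. Fix a vertex $x$ and let $y=x^1$ (the swap of positions $1,2$) and $x'=x^{n-1}$ (the swap of positions $n-1,n$); since $n\ge 4$ the position sets $\{1,2\}$ and $\{n-1,n\}$ are disjoint, so these two swaps commute and $C=x\,y\,y'\,x'$, with $y'=(x^1)^{n-1}=(x^{n-1})^1$, is a $4$-cycle whose edges $xy,x'y'$ are the two pair-edges and whose edges $xx',yy'$ are the two couplers. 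Put $T=\{x,y,x',y'\}$, $S=N(T)\setminus T$, $F_1=S\cup\{x,y\}$ and $F_2=S\cup\{x',y'\}$; then $F_1\triangle F_2=T$ and $F_1\cap F_2=S$.

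Next I would show $|S|=4n-12$, so that $|F_1|=|F_2|=4n-10$. Each vertex of $T$ has degree $n-1$ and exactly two neighbours on $C$, hence $n-3$ neighbours outside $T$; the assertion is that the resulting $4(n-3)$ outer neighbours are pairwise distinct. As $B_n$ is bipartite (all its generators are transpositions), adjacent vertices of $C$ share no common neighbour, so only the diagonal pairs $\{x,y'\}$ and $\{y,x'\}$ could create coincidences. Each diagonal pair differs by a product of two disjoint transpositions, a permutation whose only factorisations into two adjacent transpositions are the two orderings of that pair; hence each diagonal pair has exactly two common neighbours, and a direct check shows both already lie on $C$. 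Therefore no outer neighbour is shared and $|S|=4n-12$. Indistinguishability is then immediate: by construction $N(T)\subseteq S\cup T=F_1\cup F_2$, so no edge joins $F_1\triangle F_2=T$ to $V(B_n)-(F_1\cup F_2)$, and Lemma 1 gives that $(F_1,F_2)$ is an indistinguishable pair.

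It remains to verify that $F_1$ and $F_2$ are genuine conditional fault-sets, i.e. that no vertex has all of its neighbours inside $F_1$ (and likewise $F_2$); this is the step I expect to be the crux. For the vertices of $T$ it follows from the symmetry of the coupler, since each of $x,y$ has its coupler-partner $x',y'$ outside $F_1$ and conversely. The delicate vertices are those in or just beyond $S$. Here I would argue that, because $B_n$ has girth $4$ and a product of two adjacent transpositions determines its factors essentially uniquely, every vertex has at most two neighbours in $F_1$: a vertex $w\in S$ keeps its unique $T$-neighbour together with at most the single further $S$-vertex reached through the coupler swap, while a vertex $w\notin S\cup T$ cannot have all $n-1$ of its neighbours in $S$ without forcing more short cycles through $T$ than $B_n$ admits. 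Since $n-1\ge 3$ for $n\ge 4$, each such $w$ retains a fault-free neighbour; the finitely many local configurations at the boundary value $n=4$ (where the two swaps are $\tau_{12}$ and $\tau_{34}$) I would confirm by hand. Once both $F_1$ and $F_2$ are shown to be conditional, the indistinguishable pair $(F_1,F_2)$ certifies that $B_n$ is not conditionally $(4n-10)$-diagnosable, so $t_c(B_n)\le 4n-11$.
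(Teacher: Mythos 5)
Your construction coincides exactly with the paper's: the same $4$-cycle $T=\{x,y,x',y'\}$ built from a pair-edge, its coupled pair-edge and the coupler, the same fault-sets $F_1=N(T)\cup\{x,y\}$ and $F_2=N(T)\cup\{x',y'\}$, and the same count $\lvert F_1\rvert=\lvert F_2\rvert=4(n-3)+2=4n-10$. Your verification that $\lvert N(T)\rvert=4n-12$ (bipartiteness rules out common neighbours of adjacent cycle vertices; each diagonal pair has exactly two common neighbours, both already on the cycle) and your appeal to Lemma 1 for indistinguishability are correct, and in fact supply details that the paper compresses into ``Obviously'' and ``It is easy to check.''

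The gap is in the step you yourself call the crux: that $F_1,F_2$ are conditional fault-sets. Your central claim, that every vertex has at most two neighbours in $F_1$, is false. For $n\geq 6$ take $w=x^{3}\in S$ (the swap of positions $3,4$ touches neither $\{1,2\}$ nor $\{n-1,n\}$): its neighbours $x$, $w^{1}=y^{3}$ and $w^{n-1}=(x')^{3}$ all lie in $F_1$, the latter two in $S$ because the swaps commute. For $w\in S$ this is repairable: one can show $w$ has exactly one neighbour in $T$ and at most two in $S$, hence at most three in $F_1$, which still leaves a fault-free neighbour once $n\geq 5$, with $n=4$ a finite check as you propose. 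The more serious hole is the case $w\notin S\cup T$, where you offer only the phrase ``forcing more short cycles through $T$ than $B_n$ admits''; that is a gesture, not an argument. What must be proved is that no vertex outside $S\cup T$ has all $n-1$ of its neighbours inside $S$, and this needs real work, for instance: every vertex of $S$ has a unique neighbour in $T$, and any two vertices of $B_n$ have at most two common neighbours (a product of two adjacent transpositions factors into generators in at most two ways), so at most $2\lvert T\rvert=8$ neighbours of $w$ lie in $S$, settling $n\geq 10$; if some $t\in T$ accounts for two of them, then $w$ is obtained from $t$ by two commuting non-cycle swaps, and a short factorisation check shows $w^{1}\notin S$, which together with pigeonhole (since $n-1>4$) settles $n\geq 6$, leaving $n=4,5$ to direct inspection. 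As written, your proposal does not establish the conditional-fault-set property. In fairness, the paper leaves exactly this point as an unproved ``easy to check''; but since your proof explicitly sets out to supply that verification, the false intermediate claim and the hand-waved case constitute a genuine gap.
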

\begin{proof}
Let $e=xy$ be one pair-edge with the coupled pair-edge $e'=x'y'$.
These two pair-edges with their coupler constitute a cycle of length
of 4. Obviously, $|N_{B_n}(x,y,x',y')|=4(n-3)$.  Let
$F_1=N\{x,y,x',y'\}\cup \{x,y\}$, $F_2=N\{x,y,x',y'\}\cup
\{x',y'\}$. It is easy to check that $F_1,F_2$ are two
indistinguishable conditional fault-sets, and $\lvert
F_1\rvert=\lvert F_2\rvert=4(n-1-2)+2=4n-10$. Thus, $t_c(B_n)\leq
4n-11$.
\end{proof}

\vskip6pt We are now ready to show the conditional diagnosability of
$B_n$ is $4n-11$ for $n \geq 5$. Let $F_1$, $F_2\subset V(B_n)$, and
$(F_1,F_2)$ be an indistinguishable conditional-pair for $n\geq 5$.
We shall show our result by proving that either $\lvert F_1
\rvert\geq 4n-10$ or $\lvert F_2 \rvert \geq 4n-10$.

\begin{lem}
For any two indistinguishable conditional fault-sets $F_1,F_2$ in
$B_n$ with $n \geq 5$, which satisfies $F_1\neq F_2$, we have either
$\lvert F_1 \rvert \geq 4n-10$ or $\lvert F_2 \rvert \geq 4n-10$.
\end{lem}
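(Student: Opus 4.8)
The plan is to show directly that $\max\{|F_1|,|F_2|\}\ge 4n-10$. Put $S=F_1\cap F_2$, $W=F_1\triangle F_2$, and $R=V(B_n)-(F_1\cup F_2)$. Since $|F_1|=|S|+|F_1\setminus F_2|$, $|F_2|=|S|+|F_2\setminus F_1|$, and $\max\{|F_1\setminus F_2|,|F_2\setminus F_1|\}\ge|W|/2$, I have $\max\{|F_1|,|F_2|\}\ge|S|+|W|/2$; by Lemma 1 (and the remark after it) no edge joins $W$ to $R$, so $N_{B_n}(W)\subseteq S$ and $|S|\ge|N_{B_n}(W)|$. Hence it suffices to establish the purely structural estimate $|N_{B_n}(W)|+|W|/2\ge 4n-10$. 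Two preliminary reductions: if $|S|\ge 4n-10$ then both $|F_i|\ge 4n-10$ and we are done, so assume $|S|\le 4n-11$; and since $F_1\ne F_2$ gives $W\ne\varnothing$ while $|F_1\cup F_2|\le 2(4n-11)<n!$ gives $R\ne\varnothing$, the set $S$ is a genuine vertex cut separating $W$ from $R$.

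Next I would confine $W$ to a bounded number of copies. Writing $B_n=\bigcup_{i=1}^n B_n^i$ with $B_n^i\cong B_{n-1}$, the sets $S_i=S\cap B_n^i$ are disjoint, each copy in $A_1$ holds at least $n-2$ of them, and $|S|\le 4n-11$ together with $n\ge5$ force $|A_1|\le 3$. By Lemma 5, $A_2-S$ is connected, so it lies in one component $C^{\ast}$ of $B_n-S$; because $N_{B_n}(W)\subseteq S$, every component of $B_n-S$ sits entirely in $W$ or entirely in $R$, and since $|C^{\ast}|\ge|A_2-S|$ has order $n!$ while $|W|\le 2(4n-11)$, the component $C^{\ast}$ cannot lie in $W$. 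Thus $C^{\ast}\subseteq R$, so $W$ meets no copy of $A_2$ and is contained in the at most three heavy copies of $A_1$. Finally, Lemma 4(2) says every vertex of $W$ has a neighbor in each of $F_1\setminus F_2$ and $F_2\setminus F_1$, so $B_n[W]$ has minimum degree at least $2$; as $B_n$ is bipartite this forces $|W|\ge4$, with equality exactly for a $4$-cycle.

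It then remains to bound $|N_{B_n}(W)|$ by a case analysis on $|A_1|\in\{1,2,3\}$ and on how $W$ sits inside those copies. Every vertex of $B_n$ has a unique cross-copy neighbor (through the swap $s_{n-1}$) and $n-2$ within-copy neighbors, the cross edges form a perfect matching (so distinct vertices of $W$ have distinct cross-neighbors), and inside the bipartite copy $B_{n-1}$ two vertices of a common $4$-cycle have no neighbor outside that cycle. The two tight configurations are the $4$-cycle lying inside a single copy (two commuting within-copy swaps) and the $4$-cycle built from a pair-edge and its coupled pair-edge across two copies; counting within-copy and cross-copy neighbors separately gives $|N_{B_n}(W)|=4(n-4)+4=4n-12$ in the first and $|N_{B_n}(W)|=4(n-3)=4n-12$ in the second (the same count as in the preceding lemma), so $|N_{B_n}(W)|+|W|/2\ge(4n-12)+2=4n-10$ in each. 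I would then argue that every other admissible $W$—larger, disconnected, or spread over three copies—only increases $|N_{B_n}(W)|$ relative to $|W|/2$. The main obstacle is precisely this extremal estimate of $|N_{B_n}(W)|$: one must control all overlaps among the neighborhoods of the vertices of $W$, and between within-copy and cross-copy neighbors, so that the value $4n-12$ is attained and never undershot, and one must verify that no $W$ satisfying the minimum-degree-$2$ and two-color conditions of Lemma 4(2) can be cheaper than a single $4$-cycle. Once this is secured, combining $|S|\ge|N_{B_n}(W)|$ with $\max\{|F_1\setminus F_2|,|F_2\setminus F_1|\}\ge|W|/2$ yields $\max\{|F_1|,|F_2|\}\ge 4n-10$, completing the proof.
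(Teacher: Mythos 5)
Your setup is sound, and it is in fact the same as the paper's: the reduction $\max\{|F_1|,|F_2|\}\ge |S|+|W|/2$ together with $N_{B_n}(W)\subseteq S$, the bound $|A_1|\le 3$, the use of Lemma 5 to push $A_2-S$ into the fault-free side and so confine $W$ to the at most three heavy copies, and the minimum-degree-$2$ observation giving $|W|\ge 4$ all appear there (the paper works with a largest component $C$ of $B_n[F_1\triangle F_2]$ rather than with all of $W$, an immaterial difference). The genuine gap is that you stop exactly where the content of the lemma begins. Computing $|N_{B_n}(W)|=4n-12$ for the two tight $4$-cycle configurations and then asserting that ``every other admissible $W$ --- larger, disconnected, or spread over three copies --- only increases $|N_{B_n}(W)|$ relative to $|W|/2$'' is not a proof but a restatement of what must be shown, and you flag it yourself as ``the main obstacle.'' Moreover, the asserted monotonicity is not even locally plausible as a vertex-by-vertex exchange argument: if you enlarge $W$ by absorbing a vertex of $N_{B_n}(W)$ into $W$, the neighborhood loses that vertex and may gain nothing, so $|N_{B_n}(W)|+|W|/2$ can strictly decrease at each such step. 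Any correct argument has to be a global count, and that count is precisely what is missing.

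For comparison, the paper closes this by cases on $|A_1|$. If $A_1=\{X\}$, every vertex of $C\subseteq X$ has its unique cross-copy neighbor in $S\cap A_2$, and these are pairwise distinct (the cross edges form a matching), so $|S\cap A_2|\ge |C|$; inside $X$ it takes a path $T$ on four vertices of $C$ and uses $|N_X(C)|\ge |N_X(T)|-|C-T|\ge 4(n-4)-(|C|-4)$, so the within-copy loss $-|C|$ is exactly cancelled by the cross-matching gain $+|C|$, yielding $|S|\ge 4n-12$ independently of how large or how spread out $C$ is. If $A_1=\{X,Y\}$, each of $C\cap X$ and $C\cap Y$ has at least two vertices (a vertex of $C$ needs two neighbors in $C$ but has only one outside its copy), whence $|N_X(C\cap X)|,\,|N_Y(C\cap Y)|\ge 2(n-2)-2$. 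If $A_1=\{X,Y,Z\}$, the paper bounds only $|N_X(C\cap X)|\ge 2(n-2)-2$ and then uses $|S\cap Y|\ge n-2$ and $|S\cap Z|\ge n-2$ directly from the definition of $A_1$ --- information about $S$ that your pure $|N_{B_n}(W)|$ formulation discards, since those faulty vertices need not be neighbors of $W$ at all. So your plan is recoverable, but the deferred counting is the proof, and your framework makes parts of it (the three-copy case, and controlling neighborhood overlaps for large $W$) strictly harder than the route the paper takes.
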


\begin{proof}
Since $(F_1, F_2)$ is an indistinguishable conditional-pair, there
exists no edge between $F_1\Delta F_2$ and $B_n-(F_1\cup F_2)$ by
Lemma 1. Define $S=F_1\cap F_2$ and let $B_n[F_1\Delta F_2]$ be the
subgraph of $B_n$ induced by the vertex set $F_1\Delta F_2$. We
choose a maximal component $C$ in $B_n[F_1\Delta F_2]$ when
$B_n[F_1\Delta F_2]$ is not connected; otherwise, let
$C=B_n[F_1\Delta F_2]$. By lemma 4, we have $|C|\geq 4$. Thus, we
only need to prove $\lceil \frac{\lvert C\rvert}{2}\rceil+\lvert
S\rvert \geq 4n-10$, which implies that $\lvert F_1\rvert \geq
4n-10$ or $\lvert F_2\rvert \geq 4n-10$.

We decompose $B_n$ into $n$ subgraphs $B_n^i$($i=1,2,\ldots,n$) such
that each $B_n^i$ fixes $i$ in the last position of the label
strings which represents the vertices. Let $S_i=S\cap B_n^i$ for
$i=1,2,\ldots, n$.

If $\lvert S\rvert \geq 4n-12$, then we have $\lceil \frac{\lvert
C\rvert}{2}\rceil+\lvert S\rvert \geq 4n-10$ for $\lvert C\rvert
\geq 4$, so the lemma holds. Now, we only consider the situation
$\lvert S\rvert \leq 4n-13$.

Let $A_1=\{ B_n^i\ |\ B_n^i$ contains at least $n-2$ nodes in $S\}$,
and  $A_2=\{ B_n^i\ |\ B_n^i$ contains at most $n-3$ nodes in $S\}$.
Obviously, $A_1$ has at most three elements by the fact that
$4(n-2)>4n-12$.

If $A_1=\phi$, by lemma 5 we have $|C|\geq (n-3)[(n-1)!-(n-3)]>
2(4n-12)$ for $n\geq 5$. Thus, we have $\lceil \frac{\lvert
C\rvert}{2}\rceil+\lvert S\rvert \geq 4n-10$. Now, we consider
$A_1\neq\phi$ as follows.

{\bf Case 1. $C\cap (A_2-S)\neq \phi$.}

Since $A_1$ has at most three subgraphs, we have $|C|\geq
(n-3)[(n-1)!-(n-3)]\geq 2(4n-10)$ for $n\geq 5$; and we arrive at
the result.

{\bf Case 2. $C\cap (A_2-S)= \phi$.}

Obviously, $C\subset A_1$, $N_{A_2}(C)\subset S\cap A_2$ by the
maximality of $C$. Now, we divide this case into three subcases
below.

{\bf Subcase 2.1. There is exactly one subgraph, say $X$, in $A_1$.}

Since every vertex of $C\cap X$ has exactly one neighbor outside of
$X$, we have $N_{B_n-X}(C)\subset S\cap A_2$; and so
$|N_{B_n-X}(C)|=|C|$. Obviously, $N_X(C)\subset S\cap X$. Since
$|C|\geq 4$, let $T$ be a path of length three in $C$. Obviously,
$N_X(C)\supseteq N_X(T)-(C-T)$, and so
$$|N_X(C)|\geq |N_X(T)|-(|C-T|)\geq 4n-12-|C|.$$

Since $S=(S\cap X)\cup (S\cap A_2)$, we have $$|S|=|S\cap X|+ |S\cap
A_2|\geq |N_X(C)|+|N_{B_n-X}(C)|\geq 4n-12.$$ Thus, we have $\lceil
\frac{\lvert C\rvert}{2}\rceil+\lvert S\rvert \geq 4n-10.$

{\bf Subcase 2.2. There are exactly two subgraphs, say $X$ and $Y$,
in $A_1$.}

We assume, without loss of generality, that $|C\cap X|\geq |C\cap
Y|$. For any $x\in C\cap X$, $|N_C(x)|\geq 2$ by Lemma 4 and every
vertex of $B_n^i$ has exactly one neighbor outside of this subgraph,
then we have $|C\cap X|\geq |C\cap Y|\geq 2$. Due to $N_X(C\cap
X)\subset S\cap X$, and $C\cap X$ has at least two vertices, $|S\cap
X|\geq |N_X(C\cap X)|\geq 2(n-2)-2$. Similarly, we have $|S\cap
Y|\geq |N_Y(C\cap Y)|\geq 2(n-2)-2$. Since every pair of $S\cap X$,
$S\cap Y$, $S\cap A_2$ are disjoint, we have $$S=(S\cap X)\cup
(S\cap Y)\cup (S\cap A_2)\supseteq N_X(C\cap X)\cup N_Y(C\cap Y)\cup
S\cap A_2.$$ Then we have
$$|S|\geq |N_X(C\cap X)|+ |N_Y(C\cap Y)|+ |S\cap A_2|\geq
2(n-2)-2+2(n-2)-2+|S\cap A_2|.$$

Thus, we have $\lceil \frac{\lvert C\rvert}{2}\rceil+\lvert S\rvert
\geq 4n-10.$

{\bf Subcase 2.3. There are exactly three subgraphs, say $X$, $Y$,
$Z$, in $A_1$.}

We assume, without loss of generality, that $|C\cap X|\geq 2$ by the
fact that $|C|\geq 4$. We have $|F\cap X|\geq |N_X(C\cap X)|\geq
2(n-2)-2$. Since every vertex of $B_n^i$ has exactly one neighbor
outside of this subgraph, and every pair of $S\cap X$, $S\cap Y$,
$S\cap Z$, $S\cap A_2$ are disjoint, we have
$$S=(S\cap X)\cup (S\cap Y)\cup (S\cap Z)\cup (S\cap A_2)\supseteq N_X(C\cap
X)\cup (S\cap Y)\cup (S\cap Z)\cup S\cap A_2.$$ Then we have
$$|S|\geq |N_X(C\cap X)|+ |S\cap Y|+|S\cap Z|+ |S\cap A_2|\geq
2(n-2)-2+(n-2)+(n-2)+|S\cap A_2|\geq 4n-10.$$

Thus, we have $\lceil \frac{\lvert C\rvert}{2}\rceil+\lvert S\rvert
\geq 4n-10.$
\end{proof}

By Lemma 6 and 7, we have
\begin{thm}
The conditional diagnosability of bubble sort graph $B_n$ under the
PMC model is $t_c(B_n)= 4n-11$ $(n\geq 5)$. \hfill $\Box$
\end{thm}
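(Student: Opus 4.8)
\begin{pf}[Proof proposal for Theorem~1]
The plan is to derive the equality $t_c(B_n)=4n-11$ from two matching one-sided bounds, linked through the characterization of conditional diagnosability: by the conditional analogue of Lemma~3, $t_c(B_n)\geq t$ holds exactly when every \emph{indistinguishable} conditional pair $(F_1,F_2)$ satisfies $\max\{|F_1|,|F_2|\}\geq t+1$, while $t_c(B_n)\leq t$ holds as soon as a single indistinguishable conditional pair with both sides of size at most $t+1$ is produced. Thus it suffices to prove $t_c(B_n)\leq 4n-11$ and $t_c(B_n)\geq 4n-11$ separately and then concatenate them.

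For the upper bound I would construct one explicit indistinguishable conditional pair of cardinality $4n-10$, which is the content of Lemma~6. The gadget uses the coupled pair-edges: a pair-edge $e=xy$, its coupled pair-edge $e'=x'y'$, and the coupler form a $4$-cycle, and I set $F_1=N(\{x,y,x',y'\})\cup\{x,y\}$, $F_2=N(\{x,y,x',y'\})\cup\{x',y'\}$. Then $F_1\Delta F_2=\{x,y,x',y'\}$ sends no edge to $B_n-(F_1\cup F_2)$, so by Lemma~1 the pair is indistinguishable; neither set swallows the full neighbourhood of any vertex, so both are conditional; and $|N_{B_n}(\{x,y,x',y'\})|=4(n-3)$ yields $|F_1|=|F_2|=4n-10$. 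This is the routine direction.

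For the lower bound I would take an arbitrary indistinguishable conditional pair $(F_1,F_2)$ and show $\max\{|F_1|,|F_2|\}\geq 4n-10$, which is Lemma~7. Setting $S=F_1\cap F_2$ and choosing a maximal component $C$ of $B_n[F_1\Delta F_2]$, the decisive reduction is the inequality $\lceil|C|/2\rceil+|S|\geq 4n-10$: the larger of the two halves $C\cap(F_1\setminus F_2)$, $C\cap(F_2\setminus F_1)$ has at least $\lceil|C|/2\rceil$ vertices and lies inside one of $F_1,F_2$, so this inequality forces one of $|F_1|,|F_2|$ above $4n-10$. Since $|C|\geq 4$ by Lemma~4, the range $|S|\geq 4n-12$ is immediate, and the work concentrates on $|S|\leq 4n-13$. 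Here I would use the recursive decomposition $B_n=\bigcup_i B_n^i$ with $B_n^i\cong B_{n-1}$, split the copies into the heavy family $A_1$ (at least $n-2$ vertices of $S$) and the light family $A_2$ (at most $n-3$), observe $|A_1|\leq 3$ because $4(n-2)>4n-12$, and invoke Lemma~5 that $A_2-S$ is connected.

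The main obstacle is the case analysis that follows. When $A_1=\emptyset$, or when $C$ meets the large connected region $A_2-S$, the component $C$ inherits size of order $(n-3)[(n-1)!-(n-3)]$, dwarfing $4n-10$. The delicate case is $C\subseteq A_1$, split according to whether $A_1$ has one, two, or three heavy copies: there one must bound $|S|$ from below purely by counting, inside each heavy copy $X\cong B_{n-1}$, the neighbours $N_X(C\cap X)\subseteq S$. This is where the argument is tightest, since it rests on the $(n-2)$-regularity and girth of $B_n$ together with the fact that every vertex has a unique neighbour outside its own copy; the key quantitative input is that a short path of $C\cap X$ already has about $4n-12$ neighbours in $X$, and balancing this interior count against the $|C|$ edges that $C$ sends to other copies is the step I expect to be fiddly. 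Once both lemmas are secured, the theorem follows at once by combining $t_c(B_n)\leq 4n-11$ with $t_c(B_n)\geq 4n-11$.
\end{pf}
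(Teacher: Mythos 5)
Your proposal follows essentially the same route as the paper: the upper bound via the coupled pair-edge $4$-cycle gadget giving an indistinguishable conditional pair of size $4n-10$ (the paper's Lemma~6), and the lower bound via $S=F_1\cap F_2$, the maximal component $C$ of $B_n[F_1\Delta F_2]$, the reduction to $\lceil|C|/2\rceil+|S|\geq 4n-10$, and the decomposition into heavy copies $A_1$ and light copies $A_2$ with the same case analysis on $|A_1|\in\{1,2,3\}$ (the paper's Lemma~7). The outline is faithful to the paper's argument, including the path-of-length-three neighbour count inside a heavy copy, so it is correct in exactly the paper's sense.
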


\begin{thm}
The conditional diagnosability of bubble sort graph $B_4$ under the
PMC model is $t_c(B_4)=5$.
\end{thm}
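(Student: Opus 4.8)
The plan is to establish the two inequalities $t_c(B_4)\le 5$ and $t_c(B_4)\ge 5$ separately. The point worth stressing at the outset is that $n=4$ genuinely requires its own argument: Lemma 5 and Lemma 7 are proved only for $n\ge 5$ (they use $(n-2)!>2(n-3)$ and the bound $(n-3)[(n-1)!-(n-3)]$), so the value $4\cdot 4-11=5$ cannot simply be read off Theorem 1. Accordingly I would run the upper bound through Lemma 6 specialized to $n=4$, and then give a short, purely structural proof of the lower bound that exploits the fact that $B_4$ is $3$-regular, $3$-connected, and bipartite.

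For the upper bound I would specialize the construction of Lemma 6 to $n=4$. Take a pair-edge $e=xy$ and its coupled pair-edge $e'=x'y'$; together with the coupler these four vertices form a $4$-cycle $C$, and the $4(n-3)=4$ vertices of $N_{B_4}(\{x,y,x',y'\})$ are exactly the four external neighbours obtained by the remaining adjacent swap. Setting $F_1=N_{B_4}(C)\cup\{x,y\}$ and $F_2=N_{B_4}(C)\cup\{x',y'\}$ yields two distinct sets with $|F_1|=|F_2|=6$ whose symmetric difference is $C$, all of whose neighbours lie in $F_1\cup F_2$; hence there is no edge from $F_1\Delta F_2$ to $V-(F_1\cup F_2)$, so $(F_1,F_2)$ is indistinguishable by Lemma 1. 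Since the sets are small I would verify the conditional property directly, after which $B_4$ fails to be conditionally $6$-diagnosable and $t_c(B_4)\le 5$.

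For the lower bound, suppose $(F_1,F_2)$ is an indistinguishable conditional pair with $F_1\ne F_2$ and, for contradiction, $\max\{|F_1|,|F_2|\}\le 5$. Put $S=F_1\cap F_2$ and let $C$ be a largest component of $B_4[F_1\Delta F_2]$. Condition (2) of Lemma 4 forces each vertex of $C$ to have a neighbour in $F_1-F_2$ and a neighbour in $F_2-F_1$ inside $C$, so $C$ has minimum degree $2$ and contains both colours; hence $|C|\ge 4$, and since $B_4$ is bipartite (triangle-free) a four-vertex component of minimum degree $2$ must be a $4$-cycle. Because $C$ is a component and there is no edge from $F_1\Delta F_2$ to $V-(F_1\cup F_2)$, we get $N_{B_4}(C)\subseteq S$. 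The assumption $\max\{|F_1|,|F_2|\}\le 5$ gives $|F_1\cup F_2|\le 10<24$, so $V-(F_1\cup F_2)\ne\varnothing$ and $N_{B_4}(C)$ is a cut separating $C$ from a nonempty set, whence $|S|\ge|N_{B_4}(C)|\ge\kappa(B_4)=3$. Writing $R_j=C\cap(F_j\setminus F_{3-j})$ we have $|F_j|\ge|S|+|R_j|$, so $\max\{|F_1|,|F_2|\}\ge|S|+\lceil|C|/2\rceil$; if $|S|\ge 4$ this already exceeds $5$, a contradiction.

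The main obstacle is the borderline case $|S|=3$ and $|C|=4$, where the estimate $|S|+\lceil|C|/2\rceil$ yields only $5$. Here I would invoke the exact structure of $4$-cycles in $B_4$: the only commuting pair of adjacent swaps acts on positions $1,2$ and $3,4$, so every $4$-cycle is a pair-edge together with its coupler, and the four external neighbours obtained by applying the third adjacent swap (positions $2,3$) to the four distinct cycle-vertices are pairwise distinct and lie outside $C$. Thus $|N_{B_4}(C)|=4$, contradicting $N_{B_4}(C)\subseteq S$ with $|S|=3$. Consequently, when $|S|=3$ we must have $|C|\ge 5$, giving $\max\{|F_1|,|F_2|\}\ge 3+3=6$, again a contradiction. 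Therefore every indistinguishable conditional pair contains a set of size at least $6$, so $B_4$ is conditionally $5$-diagnosable and $t_c(B_4)\ge 5$; combined with the upper bound this gives $t_c(B_4)=5$.
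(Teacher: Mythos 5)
Your proof is correct, and it diverges from the paper's exactly at the one point where real work is needed. Both arguments share the same skeleton: set $S=F_1\cap F_2$, take a largest component $C$ of $B_4[F_1\Delta F_2]$, observe $|C|\geq 4$ and $\max\{|F_1|,|F_2|\}\geq |S|+\lceil |C|/2\rceil$, and dispose of $|S|\geq 4$ by counting; both also get the upper bound from the same pair-edge/coupler construction (the paper via Lemma 6, you by re-running it at $n=4$, with the added merit that you actually propose verifying the conditional property that the paper dismisses as ``easy to check''). The divergence is in the case $|S|\leq 3$. The paper asserts, without proof, that ``in the worst case'' $B_4-S$ has two components, one of which is a singleton --- in effect invoking super-connectedness of $B_4$ --- so that $C$, having at least $4$ vertices, must be the large component of size $20$, which overwhelms the count. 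You instead note $N_{B_4}(C)\subseteq S$, use $\kappa(B_4)=3$ to force $|S|\geq 3$, and eliminate the tight case $|S|=3$, $|C|=4$ by a local structural fact: since the swaps on positions $(1,2)$ and $(3,4)$ form the only commuting pair of adjacent swaps, every $4$-cycle of $B_4$ is a pair-edge together with its coupler, and its four external neighbours (the images of the cycle vertices under the position-$(2,3)$ swap, an injective map) are pairwise distinct, so $|N_{B_4}(C)|=4>|S|$, a contradiction; then $|C|\geq 5$ restores the count $3+\lceil 5/2\rceil=6$. This is a genuinely different, and arguably more rigorous, resolution of the critical case: the paper's route rests on a global separation property of $B_4$ that it never establishes, whereas yours needs only the known connectivity value and a checkable statement about $4$-cycles. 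The only soft spots in your write-up are promissory (``I would verify the conditional property directly'') rather than logical, and the claims involved are true and routine to confirm.
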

\begin{proof}
Let $F_1,F_2$ be two distinguishable fault-sets in $B_4$. Denote
$S=F_1\cap F_2$ and $C\subset F_1\Delta F_2$ be a connected
component in $B_4-S$. By Lemma 4, we have $\lvert C\rvert \geq 4$.

If $\lvert S\rvert \geq 4$, then $\lceil \frac{\lvert
C\rvert}{2}\rceil+\lceil S\rceil \geq 6$. Now we suppose that
$\lvert S\rvert \leq 3$. In the worst case, $B_4-S$ has two
components, one of which is an isolated vertex $\{u\}$, then we have
$C=B_4-S-\{v\}$, which implies $\lvert C\rvert=20$. So, $\lceil
\frac{\lvert C\rvert}{2}\rceil+\lceil S\rceil > 10$. Thus
$t_c(B_4)\geq 5$, while $t_c(B_4)\leq 5$, hence $t_c(B_4)=5$.
\end{proof}

\section{Conclusion}
The issue of identifying faulty processors is important for the
design of multiprocessor interconnected systems, which are
implementable with VLSI. The process of identifying all the faulty
processors is the system-level diagnosis. This paper establishes the
conditional fault diagnosability of the bubble sort graphs under the
PMC model. The conditional diagnosability is about four times the
traditional diagnosability under PMC model. This method can be also
applied to other complex network structure.

\end{document}